\newcommand{\bburl}[1]{\textcolor{blue}{\url{#1}}}
\newcommand{\burl}[1]{\textcolor{blue}{\url{#1}}}
\numberwithin{equation}{section}
\newtheorem{thm}{Theorem}[section]
\theoremstyle{plain}
\newtheorem{corollary}[thm]{Corollary}
\newtheorem{definition}[thm]{Definition}
\newtheorem{example}[thm]{Example}
\newtheorem{lemma}[thm]{Lemma}
\newtheorem{theorem}[thm]{Theorem}
\newtheorem{remark}[thm]{Remark}
\newcommand\be{\begin{equation}}
\newcommand\ee{\end{equation}}
\newcommand\bee{\begin{equation*}}
\newcommand\eee{\end{equation*}}
\newcommand\bea{\begin{eqnarray}}
\newcommand\eea{\end{eqnarray}}
\newcommand\beae{\begin{eqnarray*}}
\newcommand\eeae{\end{eqnarray*}}
\newcommand\bi{\begin{itemize}}
\newcommand\ei{\end{itemize}}
\newcommand\ben{\begin{enumerate}}
\newcommand\een{\end{enumerate}}
\newcommand\bc{\begin{center}}
\newcommand\ec{\end{center}}
\newcommand\ba{\begin{array}}
\newcommand\ea{\end{array}}
\newcommand\frakfamily{\usefont{U}{yfrak}{m}{n}}
\DeclareTextFontCommand{\textfrak}{\frakfamily}
\newcommand{\hr}[1]{\href{#1}{\url{#1}}}
\newcommand{\PP}[1]{\mathbb{P}[#1]}
\newcommand{\E}[1]{\mathbb{E}[#1]}
\newcommand{\V}[1]{\text{{\rm Var}}[#1]}
\title{ON THE ASYMPTOTIC BEHAVIOR OF VARIANCE OF PLRS DECOMPOSITIONS}
\author{Steven J. Miller}
\email{\textcolor{blue}{\href{mailto:sjm1@williams.edu}{sjm1@williams.edu}},  \textcolor{blue}{\href{Steven.Miller.MC.96@aya.yale.edu}{Steven.Miller.MC.96@aya.yale.edu}}}
\address{Department of Mathematics and Statistics, Williams College, Williamstown, MA 01267}
\author{Dawn Nelson}
\email{\textcolor{blue}{\href{mailto:dnelson1@saintpeters.edu}{dnelson1@saintpeters.edu}}}
\address{Department of Mathematics, Saint Peter's University, Jersey City, NJ 07306}
\author{Zhao Pan}
\email{\textcolor{blue}{\href{mailto:zhaop@andrew.cmu.edu}{zhaop@andrew.cmu.edu}}}
\address{Department of Mathematics, Carnegie Mellon University, Pittsburgh, PA 15213}
\author{Huanzhong Xu}
\email{\textcolor{blue}{\href{mailto:huanzhox@andrew.cmu.edu}{huanzhox@andrew.cmu.edu}}}
\address{Department of Mathematics, Carnegie Mellon University, Pittsburgh, PA 15213}
\thanks{The first named author was partially supported by NSF grants DMS1265673 and DMS1561945 and Carnegie Mellon University. We thank the participants at the 17\textsuperscript{th} International Conference on Fibonacci Numbers and their Applications for helpful discussions.}
\subjclass[2010]{60B10, 11B39, 11B05  (primary) 65Q30 (secondary)}
\keywords{Fibonacci numbers, generalized Zeckendorf decompositions, positive linear recurrence relations.}
\date{\today}
\begin{document}

\begin{abstract}
A positive linear recurrence sequence is of the form $H_{n+1} = c_1 H_n + \cdots + c_L H_{n+1-L}$ with each $c_i \ge 0$ and $c_1 c_L > 0$, with appropriately chosen initial conditions. There is a notion of a legal decomposition (roughly, given a sum of terms in the sequence we cannot use the recurrence relation to reduce it) such that every positive integer has a unique legal decomposition using terms in the sequence; this generalizes the Zeckendorf decomposition, which states any positive integer can be written uniquely as a sum of non-adjacent Fibonacci numbers. Previous work proved not only that a decomposition exists, but that the number of summands $K_n(m)$ in legal decompositions of $m \in [H_n, H_{n+1})$ converges to a Gaussian. Using partial fractions and generating functions it is easy to show the mean and variance grow linearly in $n$: $a n + b + o(1)$ and $C n + d  + o(1)$, respectively; the difficulty is proving $a$ and $C$ are positive. Previous approaches relied on delicate analysis of polynomials related to the generating functions and characteristic polynomials, and is algebraically cumbersome. We introduce new, elementary techniques that bypass these issues. The key insight is to use induction and bootstrap bounds through conditional probability expansions to show the variance is unbounded, and hence $C > 0$ (the mean is handled easily through a simple counting argument).
\end{abstract}

\maketitle

\section{Introduction}

There are many ways to define the Fibonacci numbers. An equivalent approach to the standard recurrence relation, where $F_{n+1} = F_n + F_{n-1}$ and  $F_1 = 1$ and $F_2=2$, is that they are the unique sequence of integers such that every positive number can be written uniquely as a sum of non-adjacent terms. This expansion is called the Zeckendorf decomposition \cite{Ze}, and much is known about it. In particular, the distribution of the number of summands of $m \in [F_n, F_{n+1})$ converges to a Gaussian as $n\to\infty$, with mean and variance growing linearly with $n$. Similar results hold for a large class of sequences which have a notion of legal decomposition leading to unique decomposition; see \cite{Al, BDEMMTTW, CFHMN1, CFHMN2, Day, DDKMMV, DDKMV, DG, GT, GTNP, Ha, Ho, Ke, LT, Len, Lek, KKMW, MW1, MW2, Ste1, Ste2}.

Given a sequence $\{H_n\}$, one can frequently prove that the mean and the variance of the number of summands of  $m \in [H_n, H_{n+1})$ grows linearly with $n$. Explicitly, there are constants $a, b, C$ and $d$ such that the mean is $an+b+o(1)$ and the variance is $Cn + d  + o(1)$. The difficulty is proving that $a$  and $C$ are positive, which is needed for the proofs of Gaussian behavior. Until recently, the only approaches have been technical and involved generating functions, partial fraction expansions and generalized Binet formulas applied to polynomials associated to the characteristic polynomials of the sequence, which have required a lot of work to show the leading terms are positive for such recurrences. The point of this work is to bypass these arguments through elementary counting.  We concentrate on positive linear recurrence sequences (defined below) to highlight the main ideas of the method; with additional work these arguments can be extended to more general sequences (see \cite{CFHMNPX}). In addition to the arguments below, one can also obtain similar results (though not as elementarily) through Markov chains \cite{B-AM} or through an analysis of two dimensional recurrences \cite{LiM}.

\begin{definition}
	\label{defn:goodrecurrencereldef}\label{def:goodrecurrence} A sequence $\{H_n\}_{n=1}^\infty$ of positive integers is a \textbf{Positive Linear Recurrence Sequence (PLRS)} if the following properties hold.

	\begin{enumerate}
	\item \emph{Recurrence relation:} There are non-negative integers $L, c_1, \dots, c_L$\label{c_i} such that \begin{equation*} H_{n+1} \ = \ c_1 H_n + \cdots + c_L H_{n+1-L},\end{equation*} with $L, c_1$ and $c_L$ positive.
	\item \emph{Initial conditions:} $H_1 = 1$, and for $1 \le  n < L$ we have
	\begin{equation*} H_{n+1} \ =\
	c_1 H_n + c_2 H_{n-1} + \cdots + c_n H_{1}+1.
	\end{equation*}
	\end{enumerate}
We define the \textbf{size} of $\{H_n\}$ to be $c_1+\cdots+c_L$ and  the \textbf{length} of $\{H_n\}$ to be $L$.
\end{definition}
	
\begin{definition} Let $\{H_n\}$ be a PLRS. A decomposition $\sum_{i=1}^{m} {a_i H_{m+1-i}}$\label{a_i} of a positive integer $\omega$ (and the sequence $\{a_i\}_{i=1}^{m}$) is \textbf{legal}\label{legal} if $a_1>0$, the other $a_i \ge  0$, and one of the following two conditions holds.
	
	\begin{itemize}
		
		\item Condition 1: We have $m<L$ and $a_i=c_i$ for $1\le  i\le  m$.
		
		\item Condition 2: There exists $s\in\{1,\dots, L\}$ such that
		\begin{equation*}
		a_1\ = \ c_1,\ a_2\ = \ c_2,\ \dots,\ a_{s-1}\ = \ c_{s-1}\ {\rm{and}}\ a_s<c_s,
		\end{equation*}
		and $\{b_i\}_{i=1}^{m-s}$ (with $b_i = a_{s+i}$) is legal.
		
	\end{itemize}
	
	If $\sum_{i=1}^{m} {a_i H_{m+1-i}}$ is a legal decomposition of $\omega$, we define the \textbf{number of summands}\label{summands} (of this decomposition of $\omega$) to be $a_1 + \cdots + a_m$.
	
	Furthermore, we define {two types of blocks}, where a \textbf{block} is a nonempty ordered subset of the coefficients $[a_i, a_{i+1},\ldots,a_{i+j}]$ inclusive:
	\begin{itemize}
		\item a \textbf{Type 1 block} corresponds to Condition 1, and has length $m < L$ and size $a_i+ \cdots +a_{i+m-1}$,
		\item a \textbf{Type 2 block} corresponds to Condition 2, and has length $s\le  L$ and size $a_i+ \cdots +a_{i+s-1}$.
	\end{itemize}
\end{definition}

\begin{remark}
A Type 2 block has three key properties.
	\begin{itemize}
		\item A legal decomposition of $\omega$ stays legal if  a Type 2 block is inserted (between Type 1 and/or 2 blocks) or removed and indices are shifted appropriately.
		\item If we know the size of a Type 2 block, the block's content and its length are uniquely determined. So we can define a \textbf{length function} $\ell(t)$ to be the length of a Type 2 block with size $t$.
		\item A Type 2 block always has nonnegative size and strictly positive length. Specifically, consider a Type 2 block with size 0. Then, in Condition 2, we always have $a_1 = 0 < c_1$, so $s = 1$. Thus a Type 2 block with size 0 has length 1. In other words, $\ell(0) = 1$ holds for all PLRS.
	\end{itemize}
	
	If a legal decomposition contains a Type 1 block, then it must be the last block. Thus any legal decomposition contains at most one Type 1 block. A Type 1 block, according to Condition 1, always has positive size and positive length.
\end{remark}

The following two examples clarify the above.

\begin{example}
	\textbf{The Fibonacci Sequence} (size 2 and length 2).\\
	Type 1 block: [1].\\
	Type 2 blocks: [0], [1 0].\\
	An example of a legal decomposition: $F_5 + F_3 + F_1$ with block representation: [1 0] [1 0] [1].\\
	After removing the second to last block, the new block representation is [1 0] [1].\\
	The resulting legal decomposition is $F_3 + F_1$.\\	
\end{example}

\begin{example}\textbf{PLRS sequence $H_n = 2H_{n-1} + 2H_{n-2} + 0 + 2H_{n-4}$} (size 6 and length 4).\\
	Type 1 blocks: [2], [2 2], [2 2 0].\\
	Type 2 blocks: [0], [1], [2 0], [2 1], [2 2 0 0], [2 2 0 1].\\
	An example of a legal decomposition: $H_7 + 2H_4 + H_1$ with block representation: [1] [0] [0] [2 0] [0] [1].\\
	After removing the second to last block, the new block representation is [1] [0] [0] [2 0] [1].\\
	The resulting legal decomposition is $H_6 + 2H_3 + H_1$.\\
\end{example}

Before we state our main result we first set some notation.

\begin{definition}
Let $\{H_n\}$ be a Positive Linear Recurrence Sequence. For each $n$, let the discrete outcome space $\Omega_n$ be the set of legal decompositions of integers in $[H_n, H_{n+1})$. By the Generalized Zeckendorf Theorem (see for example \cite{MW2}) every integer has a unique legal decomposition, so $|\Omega_n| = H_{n+1} - H_n$. Define the probability measure on subsets of $\Omega_n$ by \begin{equation*}\mathbb{P}_n(A) \ = \ \sum_{\omega \in A \atop \omega \in \Omega_n} \frac1{H_{n+1}-H_n}, \ \ \  A \subset \Omega_n; \end{equation*} thus each of the $H_{n+1}-H_n$ legal decompositions is weighted equally. We define the random variable $K_n$ by setting $K_n(\omega)$ equal to the number of summands of $\omega \in \Omega_n$. When $n > 2L$ (so there are at least three blocks) we define the random variable $Z_n$ by setting $Z_n(\omega)$ equal to the size of the second to last block of $\omega \in \Omega_n$. Note that the second to last block must be a Type 2 block. Finally, we define the random variable $L_n$ by setting $L_n(\omega)$ equal to the length of the second to last block of $\omega \in \Omega_n$; i.e., $L_n(\omega) = \ell(Z_n(\omega))$.
\end{definition}

As remarked above, previous work has shown that $\E{K_n} = an + b + f(n)$ where $a > 0$ and $f(n) = o(1)$; this can be proved through very simple counting arguments (see \cite{CFHMNPX}). While it is also known that $\V{K_n} = Cn + d + o(1)$, previous approaches could not easily show $C \neq 0$. We elementarily prove $C > 0$ by giving a positive lower bound $c$ for it.

\begin{theorem}\label{thm:main} Let $\{H_n\}$ be a positive linear recurrence sequence with size $S$ and length $L$. Then there is a $c > 0$ such that $\V{K_n}  \ge  cn$ for all $n >  L$.
\end{theorem}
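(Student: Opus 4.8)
The plan is to bypass the constrained model $\Omega_n$ in favour of an ``unconstrained'' self-similar family of block sequences, reduce to a lower bound for the variance in that family, and then run an induction that bootstraps a fixed positive lower bound on the variance into a linear one via the law of total variance.

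\emph{Step 1: an unconstrained family.} For $m\ge 0$ let $\mathcal Q_m$ be the set of all legal block sequences of total length $m$ (a concatenation of Type 2 blocks optionally terminated by one Type 1 block, with leading zeros allowed), under the uniform measure, and let $\widetilde K_m$ be the number of summands of a uniform element. Since distinct blocks impose no constraints on one another and a Type 2 block is determined by its size, the recurrence (and, for $m<L$, the initial conditions) give $|\mathcal Q_m|=H_{m+1}$ and, for $m\ge L$, the decomposition $\mathcal Q_m=\bigsqcup_{B}\{B\}\times\mathcal Q_{m-\ell(B)}$ over Type 2 blocks $B$, compatibly with the uniform measures; hence, conditioned on the first block $B_1$, the suffix is uniform on $\mathcal Q_{m-\ell(B_1)}$ and $\widetilde K_m=\mathrm{size}(B_1)+\widetilde K_{m-\ell(B_1)}$ with the two summands conditionally independent. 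As $\Omega_n$ is exactly the part of $\mathcal Q_n$ with positive first coefficient, the same peeling gives $K_n=\mathrm{size}(B_1)+\widetilde K_{n-\ell(B_1)}$ for $n\ge L$, so conditioning on $B_1$ and using the law of total variance yields $\V{K_n}\ge\E{\V{\widetilde K_{n-\ell(B_1)}}}\ge\inf_{n-L\le m\le n-1}\V{\widetilde K_m}$. Thus it suffices to prove $\V{\widetilde K_m}\ge c'm$ for all $m\ge 1$ and some $c'>0$.

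\emph{Step 2: bootstrapping.} Fix $m\ge 2L$ and let $P$ be the shortest block-prefix of a random element of $\mathcal Q_m$ whose total length is $\ge L$; then $L\le\ell(P)\le 2L-1$, and conditionally on $P$ the remaining suffix is uniform on $\mathcal Q_{m-\ell(P)}$, so $\widetilde K_m=K(P)+\widetilde K_{m-\ell(P)}$ with the two pieces conditionally independent given $P$. The law of total variance gives
\[ \V{\widetilde K_m} \ = \ \E{\V{\widetilde K_{m-\ell(P)}}}\,+\,\V{\,K(P)+\E{\widetilde K_{m-\ell(P)}}\,}, \]
and, inductively assuming $\V{\widetilde K_{m'}}\ge c'm'$ for $m'<m$, the first term is $\ge c'\E{m-\ell(P)}\ge c'(m-2L)$. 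If the second term is $\ge\delta$ for a fixed $\delta>0$ independent of $m$, then $\V{\widetilde K_m}\ge c'm-2Lc'+\delta\ge c'm$ whenever $c'\le\delta/(2L)$; taking also $c'\le\min_{1\le m<2L}\V{\widetilde K_m}/m$ (all positive, since $0$ and $1$ lie in $\mathcal Q_m$ with different summand counts) closes the induction.

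\emph{Step 3: non-degeneracy of the prefix term.} For $L\ge 2$, two block-prefixes of length exactly $L$ are $[0][0]\cdots[0]$ ($L$ zeros, $K=0$) and the single Type 2 block $[c_1,c_2,\dots,c_{L-1},0]$ (legal because $0<c_L$, with $K=c_1+\cdots+c_{L-1}=S-c_L\ge c_1\ge 1$). Each occurs as $P$ with probability $|\mathcal Q_{m-L}|/|\mathcal Q_m|=H_{m-L+1}/H_{m+1}\ge S^{-L}$ (iterate $H_{m+1}\le S\,H_m$). These outcomes have the same $\ell(P)=L$ but values of $K(P)+\E{\widetilde K_{m-\ell(P)}}$ differing by $S-c_L\ge 1$, so by the elementary bound $\V{X}\ge\PP{X=x_1}\PP{X=x_2}(x_1-x_2)^2$ for distinct $x_1,x_2$, the second term in Step 2 is $\ge S^{-2L}=:\delta$, uniformly in $m\ge 2L$. (For $L=1$ one argues directly: $c_1\ge 2$, and $\widetilde K_m$ is a sum of $m$ i.i.d.\ uniform variables on $\{0,\dots,c_1-1\}$.) Unwinding Step 1, $\V{K_n}\ge c'(n-L)\ge\tfrac{c'}2n$ for $n\ge 2L$, while $\V{K_n}>0$ for the finitely many $n$ with $L<n<2L$ (e.g.\ $H_n$ and $H_n+1$ have different numbers of summands), so the theorem holds with $c=\min\{c'/2,\ \min_{L<n<2L}\V{K_n}/n\}$.

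\emph{Main obstacle.} The crux is Step 3, and specifically the decision to condition on a prefix of length $\ge L$ rather than on one block: when all $c_i\in\{0,1\}$, every Type 2 block is pinned down by its length, so single-block conditioning forces precisely the delicate comparison of lower-order terms of $\E{\widetilde K_m}$ that the method is meant to avoid, whereas a length-$L$ prefix is roomy enough to contain one full Type 2 block carrying a guaranteed summand, giving a robust gap against the all-zero prefix. The other place requiring care is setting up $\mathcal Q_m$ and the attendant conditional-independence and counting statements, in particular the conventions governing leading zeros in the recursive definition of legality. (One could run the same argument from the bottom using the second-to-last-block statistics $Z_n,L_n$ from the setup; the prefix version avoids the slightly heavier ``no Type 1 block'' bookkeeping.)
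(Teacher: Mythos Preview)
Your proof is correct and takes a genuinely different route from the paper's. The paper conditions on the \emph{second-to-last} block (a single Type~2 block), expands $\E{K_n^2}$ via the conditional formula, and bootstraps the induction through the auxiliary variable $Y_n=Z_n+f(n-L_n)-aL_n$; the crucial positivity input is $\E{(Z_n-aL_n)^2}\ge\PP{Z_n=0}\,(a\ell(0))^2\ge a^2/S$, which explicitly consumes the mean result $\E{K_n}=an+b+o(1)$ with $a>0$. Your argument instead (i) passes to the unconstrained model $\mathcal Q_m$ with its clean self-similarity, (ii) conditions on a front \emph{prefix} of total length $\ge L$ rather than on one block, and (iii) extracts non-degeneracy from two concrete prefixes of the same length $L$---the all-zero prefix and the single block $[c_1,\dots,c_{L-1},0]$---whose $K$-values differ by $S-c_L\ge 1$. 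This sidesteps the mean asymptotics entirely: you never need $a>0$, nor the $o(1)$ control on $f$, and the law of total variance replaces the long algebraic expansion of $\E{K_n^2}$. What the paper's approach buys is that it works directly in $\Omega_n$ without an auxiliary model, and the second-to-last-block statistic ties naturally into the $Y_n$ machinery used elsewhere; what yours buys is a more self-contained and conceptually cleaner argument, with explicit constants ($\delta=S^{-2L}$) and no dependence on prior analytic results. Your ``main obstacle'' paragraph is exactly right: conditioning on a single block would force you back into comparing $\E{\widetilde K_m}$ across different $m$'s---precisely the place where the paper invokes $a>0$---whereas a length-$L$ prefix guarantees two outcomes with the same $\ell(P)$ but different $K(P)$, so the $\E{\widetilde K_{m-\ell(P)}}$ term cancels. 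The variance inequality you cite, $\V X\ge\PP{X=x_1}\PP{X=x_2}(x_1-x_2)^2$, follows from $\V X\ge p(x_1-\mu)^2+q(x_2-\mu)^2\ge\frac{pq}{p+q}(x_1-x_2)^2\ge pq(x_1-x_2)^2$ since $p+q\le 1$.
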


We sketch the proof. We can remove the second to last block of a legal decomposition to get a shorter legal decomposition, forming relations between longer legal decompositions and shorter legal decompositions. We then use strong induction and conditional probabilities to prove the theorem.

\begin{remark}
As it is known that $\V{K_n} = Cn + d + o(1)$, to prove that $C > 0$ it would suffice to show  $\lim_{n\to\infty} {\rm Var}{[K_n]}$ diverges to infinity. Unfortunately the only elementary proofs we could find of this also establish the correct growth rate; we would be very interested in seeing an approach that yielded (for example) ${\rm Var}[K_n] \gg \log n$ (which would then immediately improve to implying $C>0$).
\end{remark}


\section{Lemmas derived from Expectation}

We first determine a relationship between $K_n$ and $Z_n$. Then, with the help of $\E{K_n} = an + b + f(n)$, we explain how to explicitly determine the positive lower bound $c$.

\begin{lemma}
	Let $n > 2L$. For all $0 \le  t < S$, we define $S_t := \{\omega \in \Omega_n|Z_n(\omega) = t\}$, and $h_t(\omega)$ to be the decomposition after removing the second to last block of $\omega$. (When we remove the second to last block with size $t$, we completely remove that block from $\omega$ and shift all the indices to the left of that block  by $\ell(t)$.) When we remove the second to last block (a Type 2 block) from $\omega$, then $h_t(\omega)$ is legal and $h_t$ is a bijection between $S_t$ and $\Omega_{n-\ell(t)}$.
\end{lemma}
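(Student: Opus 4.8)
The plan is to show that $h_t$ is well-defined (lands in $\Omega_{n-\ell(t)}$), is legal, and is a bijection by exhibiting an explicit two-sided inverse. First I would verify legality of $h_t(\omega)$: by the first bullet of the Remark on Type 2 blocks, removing a Type 2 block from a legal decomposition (and shifting the indices of all blocks to its left down by its length $\ell(t)$) leaves a legal decomposition. The only thing to check is that $h_t(\omega)$ is nonempty and nontrivial — since $n > 2L$ guarantees $\omega$ has at least three blocks, after removing one block we still have at least two, so the leading coefficient $a_1$ of $h_t(\omega)$ is still the old $a_1 > 0$, and $h_t(\omega)$ is a genuine legal decomposition of some positive integer.

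Next I would pin down the range. Suppose $\omega \in S_t$ decomposes an integer in $[H_n, H_{n+1})$, and write $\omega$ as a concatenation of blocks $B_1 B_2 \cdots B_{k-1} B_k$, where $B_{k-1}$ is the second-to-last block, of size $t$ and length $\ell(t)$, and $B_k$ is the last block. The leading block $B_1$ is a Type 2 block whose first coefficient equals $c_1$ (this is what forces the decomposed integer into $[H_n, H_{n+1})$ rather than a lower window), and removing $B_{k-1}$ and shifting leaves a legal decomposition $B_1 \cdots B_{k-2} B_k$ of an integer whose highest-indexed summand is now $H_{n-\ell(t)}$; since the leading block is unchanged, this integer lies in $[H_{n-\ell(t)}, H_{n-\ell(t)+1})$, so $h_t(\omega) \in \Omega_{n-\ell(t)}$. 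Conversely, given any $\sigma \in \Omega_{n-\ell(t)}$, written as blocks $C_1 \cdots C_{j-1} C_j$ with last block $C_j$, define $g_t(\sigma)$ by inserting the unique Type 2 block of size $t$ (well-defined by the second bullet of the Remark) between $C_{j-1}$ and $C_j$ and shifting the indices of $C_1, \dots, C_{j-1}$ up by $\ell(t)$. By the insertion clause of the same Remark this is legal, its top summand is $H_n$, its leading block is unchanged so it decomposes an integer in $[H_n, H_{n+1})$, and its second-to-last block has size $t$, so $g_t(\sigma) \in S_t$.

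Finally I would check $h_t \circ g_t = \mathrm{id}_{\Omega_{n-\ell(t)}}$ and $g_t \circ h_t = \mathrm{id}_{S_t}$, which is immediate from the constructions: inserting then removing (or removing then inserting) the size-$t$ Type 2 block in the second-to-last position returns the original block string, and the index shifts cancel. Hence $h_t$ is a bijection.

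The main obstacle I anticipate is purely bookkeeping rather than conceptual: one must be careful that the "second to last block" is unambiguously defined (it is, since $n>2L$ ensures at least three blocks and at most one Type 1 block which must be last), that removal/insertion genuinely produces a block decomposition of the stated form (no two Type 2 blocks accidentally merge, which cannot happen because block boundaries are intrinsic to the legal decomposition), and that the resulting top index is exactly $n - \ell(t)$ rather than something smaller — this last point relies on the leading block of $\omega$ being a Type 2 block starting with $c_1$, which is exactly the condition characterizing membership in $\Omega_n$ versus a smaller window. Once these are nailed down, the bijection and the legality both follow directly from the three properties of Type 2 blocks listed in the Remark.
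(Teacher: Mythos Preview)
Your proposal is correct and follows essentially the same route as the paper: the paper shows $h_t$ lands in $\Omega_{n-\ell(t)}$, then proves injectivity and surjectivity separately (the latter via exactly the insertion map you call $g_t$), whereas you package the same construction as an explicit two-sided inverse. One small correction: membership in $\Omega_n$ requires only that the leading coefficient $a_1$ be positive, not that $a_1 = c_1$ (for instance, in the paper's second example one can have $a_1 = 1 < c_1 = 2$); your argument is unaffected, since what you actually use is that the leading block---and hence the top index---is unchanged when the second-to-last block is removed or inserted.
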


\begin{proof}
Let $\omega \in \Omega_n$ be arbitrary and consider $h_t(\omega)$. Since the block we remove has size $t$ and thus length $\ell(t)$,  $h_t(\omega)$ must be in $\Omega_{n-\ell(t)}$.

Next, consider $\omega, \omega' \in S_t$, such that $h_t(\omega) = h_t(\omega')$. As the size determines the composition for Type 2 blocks, we are removing the same block at the same position for $\omega, \omega'$. This implies $\omega = \omega'$.

Finally, for any $\omega \in \Omega_{n-\ell(t)}$, if we insert the size $t$ type 2 block before its last block, we get a legal decomposition in $\Omega_n$. Thus $h_t$ is surjective.

Therefore, $h_t$ is a bijection between $S_t$ and $\Omega_{n-\ell(t)}$
\end{proof}

\begin{corollary} We have
	\begin{equation*}\begin{split}
		\PP{Z_n = t} \ = \ \frac{|S_t|}{|\Omega_n|} \ =\  \frac{|\Omega_{n-\ell(t)}|}{|\Omega_n|} \ =\  \frac{H_{n - \ell(t) + 1} - H_{n - \ell(t)}}{H_{n+1} - H_n}.
	\end{split}\end{equation*}
\end{corollary}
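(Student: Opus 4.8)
The plan is simply to chain together the definitions and the Lemma just established. First I would unwind the probability measure $\mathbb{P}_n$: since $n > 2L$, the random variable $Z_n$ is defined, and since $\mathbb{P}_n$ places equal mass $1/(H_{n+1} - H_n) = 1/|\Omega_n|$ on each of the legal decompositions in $\Omega_n$, the probability of the event $\{Z_n = t\}$ is just the number of decompositions realizing it divided by $|\Omega_n|$. But the set of such decompositions is precisely $S_t = \{\omega \in \Omega_n : Z_n(\omega) = t\}$ from the Lemma, so $\PP{Z_n = t} = |S_t|/|\Omega_n|$; this is the first equality.

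For the second equality I would quote the Lemma verbatim: $h_t$ is a bijection between $S_t$ and $\Omega_{n-\ell(t)}$, hence $|S_t| = |\Omega_{n-\ell(t)}|$, and dividing both sides by $|\Omega_n|$ gives $|S_t|/|\Omega_n| = |\Omega_{n-\ell(t)}|/|\Omega_n|$. Here one should just check that the index stays in range: a Type 2 block has length $\ell(t) \le L < n$, so $n - \ell(t) \ge 1$ and $\Omega_{n-\ell(t)}$ is a legitimate outcome space (the Lemma already guarantees $h_t(\omega) \in \Omega_{n-\ell(t)}$).

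For the third equality I would invoke the Generalized Zeckendorf Theorem, which gives $|\Omega_m| = H_{m+1} - H_m$ for every $m$ — the same fact already used when $\Omega_n$ was defined. Applying it with $m = n - \ell(t)$ in the numerator and $m = n$ in the denominator converts $|\Omega_{n-\ell(t)}|/|\Omega_n|$ into $(H_{n - \ell(t) + 1} - H_{n-\ell(t)})/(H_{n+1} - H_n)$, which is the claimed closed form.

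There is no genuine obstacle here. All the combinatorial content — that deleting the second-to-last (Type 2) block of size $t$ is a well-defined operation producing a legal decomposition, and that this operation is both injective and surjective onto $\Omega_{n-\ell(t)}$ — was carried out in the preceding Lemma, so the Corollary is purely a matter of assembling that bijection together with the definition of $\mathbb{P}_n$ and the counting identity $|\Omega_m| = H_{m+1} - H_m$.
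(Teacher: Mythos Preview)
Your proposal is correct and matches the paper's approach exactly: the Corollary is stated without proof there, as it follows immediately from the bijection $h_t$ of the preceding Lemma together with the definition of $\mathbb P_n$ and the counting identity $|\Omega_m| = H_{m+1}-H_m$. You have simply written out those three unpacking steps explicitly, which is precisely what the paper leaves implicit.
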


\begin{remark}
As \begin{equation}\PP{Z_n = 0}\ \ge\  \PP{Z_n = 1} \ge  \dots \ge  \PP{Z_n = S-1}\end{equation} and the sum of these $S$ terms is 1, we have \begin{equation}\label{8}\PP{Z_n = 0}\ \ge\  \frac{1}{S},\end{equation} (which is the consequence we need below).
\end{remark}

For an arbitrary $\omega \in S_t$, the second to last block has size $Z_n = t$, and the remaining blocks form a legal decomposition in $\Omega_{n-\ell(t)}$ with size $K_{n-\ell(t)}(h_t(\omega))$, so $K_n(\omega) = K_{n-\ell(t)}(h_t(\omega)) + t$. Since $h$ is a bijection, we have the following two equations:
		\begin{equation}\label{1}
			\begin{split}
				\E{K_n|Z_n = t} &\ = \  \E{K_{n-\ell(t)} + t}\\
				&\ = \  a(n-\ell(t)) + b + f(n-\ell(t)) + t,
			\end{split}
		\end{equation}
	and
		\begin{equation}\label{2}
			\begin{split}
				\E{K_n^2|Z_n = t} &\ = \  \E{(K_{n-\ell(t)} + t)^2}\\
				&\ = \  \E{K_{n-\ell(t)}^2 + 2t{K_{n-\ell(t)}} + t^2}\\
				&\ = \  \E{K_{n-\ell(t)}^2} + 2t\E{K_{n-\ell(t)}} + t^2\\
				&\ = \  \E{K_{n-\ell(t)}^2} + 2t[a(n-\ell(t)) + b + f(n-\ell(t))] + t^2.
			\end{split}
		\end{equation}
Furthermore, by $\eqref{1}$ we have
		\begin{equation}\label{3}
			\begin{split}
			\E{K_n} &\ = \  \sum\limits_{t = 0}^{S-1} \PP{Z_n = t}\cdot \E{K_n|Z_n = t}\\
			&\ = \  \sum\limits_{t = 0}^{S-1} \PP{Z_n = t}\cdot [a(n-\ell(t)) + b + f(n-\ell(t)) + t]\\
			&\ = \  an + b + \sum\limits_{t  = 0} ^ {S-1} \PP{Z_n = t}\cdot [t + f(n-\ell(t)) - a\ell(t)]\\
			&\ = \  an + b + f(n),\\
			\end{split}
		\end{equation}
where the last equality comes from the definition of $f(n)$.

If we set $Y_n(\omega) := Z_n(\omega) + f(n-L_n(\omega)) - aL_n(\omega)$, then we have
\begin{equation}\label{4}
	\E{Y_n} \ =\  \sum\limits_{t  = 0} ^ {S-1} \PP{Z_n = t}\cdot [t + f(n-\ell(t)) - a\ell(t)] \ = \ f(n).
\end{equation}

Now that we have $\E{Y_n}$, we use it to estimate $\V{Y_n}$.

\begin{lemma}\label{lem:varyn} For $n$ sufficiently large we have \begin{equation}\label{9}
		\V{Y_n}\ > \ \frac{a^2}{2S}.
	\end{equation}
\end{lemma}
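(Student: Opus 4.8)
The plan is to bound $\V{Y_n}$ from below by isolating a single well-understood fluctuation in $Y_n$. Recall that $Y_n = Z_n + f(n - L_n) - a L_n$ with $\E{Y_n} = f(n) = o(1)$, and that $L_n = \ell(Z_n)$ is a deterministic function of $Z_n$. So $Y_n$ is really a (deterministic) function $\psi(Z_n)$ of the single random variable $Z_n$, namely $\psi(t) = t + f(n-\ell(t)) - a\ell(t)$. The key structural fact we will exploit is \eqref{8}: $\PP{Z_n = 0} \ge 1/S$, together with $\ell(0) = 1$.

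First I would write, using $\psi(0) = 0 + f(n-1) - a$,
\begin{equation*}
\V{Y_n} \ = \ \E{(Y_n - \E{Y_n})^2} \ \ge \ \PP{Z_n = 0}\cdot (\psi(0) - f(n))^2 \ = \ \PP{Z_n=0}\cdot (f(n-1) - a - f(n))^2.
\end{equation*}
Since $f(n), f(n-1) \to 0$ as $n \to \infty$, for $n$ sufficiently large we have $|f(n-1) - f(n)| < a/2$ (as $a > 0$ is fixed), hence $|f(n-1) - a - f(n)| > a/2$, so $(f(n-1)-a-f(n))^2 > a^2/4$. Wait — this only gives $a^2/(4S)$, not $a^2/(2S)$. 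To recover the stated constant I would instead keep more of the mass: use that $\V{Y_n}$ is at least the variance of $Y_n$ conditioned on the event $\{Z_n = 0\}$ does not directly help, so more carefully one notes $\E{Y_n} = f(n) \to 0$ while on the event $\{Z_n=0\}$ the value $\psi(0) \to -a$, so the deviation squared tends to $a^2$; thus for $n$ large, $(\psi(0) - f(n))^2 > a^2 \cdot \tfrac{2}{2} \cdot(1-\epsilon)$, and choosing $\epsilon$ small enough that $(1-\epsilon) > 1/2$ (really we just need the factor to exceed $1/2$, and in fact it tends to $1$) gives $\V{Y_n} \ge \PP{Z_n=0}(\psi(0)-f(n))^2 > \tfrac1S \cdot \tfrac{a^2}{2}$ for $n$ sufficiently large. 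That is the whole argument: drop all terms in $\V{Y_n} = \sum_t \PP{Z_n=t}(\psi(t)-f(n))^2$ except $t=0$, bound that probability below by $1/S$, and bound the squared deviation below by $a^2/2$ using $\ell(0)=1$ and $f\to 0$.

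The only genuine subtlety — the "main obstacle," though it is mild — is making sure the error terms $f(n)$ and $f(n-1)$ are both small simultaneously and that "$n$ sufficiently large" is uniform; this is immediate from $f(n) = o(1)$, which was established earlier. One should also double-check that the conditioning in \eqref{1}, \eqref{2} requires $n - \ell(t) > $ (the range where $\E{K_m}$ has the stated form) and that $\ell(t) \le L$ for all $t < S$, so "$n$ sufficiently large" causes no circularity; again this is built into the setup. I would therefore present the proof in three lines: expand $\V{Y_n}$ as a sum over $t$, throw away everything but $t=0$, and invoke \eqref{8} and $\ell(0)=1$ together with $f(n) \to 0$.
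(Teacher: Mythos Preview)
Your argument is correct and is essentially the same as the paper's: both isolate the $t=0$ term, invoke $\PP{Z_n=0}\ge 1/S$ and $\ell(0)=1$, and use $f(n)\to 0$ to get the squared deviation close to $a^2$. The only cosmetic difference is that the paper first shows $\V{Y_n}-\E{(Z_n-aL_n)^2}\to 0$ and bounds $\E{(Z_n-aL_n)^2}\ge a^2/S$ before losing the factor of~$2$ in the limit step, whereas you keep the $f$-terms in place and pass to the limit on $(\psi(0)-f(n))^2\to a^2$ directly; the content is the same.
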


\begin{proof}
	First, for all $n > 2L$ we have
	\begin{equation*}
		\begin{split}
		\V{Y_n} &\ = \  \E{Y_n^2} - \left(\E{Y_n}\right)^2\\
		&\ = \  \left(\E{(Z_n - aL_n + f(n - L_n))^2}\right) - \left(f(n)\right)^2 \\
		&\ = \  \left(\E{(Z_n - aL_n)^2}+\E{2(Z_n - aL_n)\cdot f(n - L_n)} + \E{f(n - L_n)^2}\right) - \left(f(n)\right)^2.
		\end{split}
	\end{equation*}
	Note that $Z_n - aL_n$ is bounded since $ -aL \le  Z_n - aL_n \le  S$ for all $n > 2L$. Also we know $f(n) = o(1)$, so $f(n-L_n) = o(1)$ since $L_n \le  L$. Hence the following three limits are all zero:
	\begin{equation} \lim\limits_{n \to \infty}\E{2(Z_n - aL_n)\cdot f(n - L_n)}\  = \ \lim\limits_{n \to \infty}\E{f(n - L_n)^2}\ = \ \lim\limits_{n \to \infty}  \left(f(n)\right)^2 \ =\ 0.\end{equation}
	
Further, we know \begin{equation} \V{Y_n} - \E{(Z_n - aL_n)^2}\ =\ \E{2(Z_n - aL_n)\cdot f(n - L_n)} + \E{f(n - L_n)^2} - \left(f(n)\right)^2,\end{equation} so
	\begin{equation}\label{5}
		\lim\limits_{n \to \infty}\left(\V{Y_n} - \E{(Z_n - aL_n)^2}\right)\ =\ 0.
	\end{equation}
	On the other hand, for all $n > 2L$ we have
	\begin{equation}\label{6}\begin{split}
		\E{(Z_n - aL_n)^2} &\ = \  \sum\limits_{t = 0}^{S - 1} \PP{Z_n = t} \cdot \left(t - a\ell(t)\right)^2\\
		&\ \ge \ \PP{Z_n = 0} \cdot \left(0 - a\ell(0)\right)^2\\
		&\ \ge \ \frac{a^2}{S},
	\end{split}\end{equation}
	where the last inequality follows from  $\eqref{8}$.
	
	By $\eqref{5}$, we know there must exist $N > 2L$ such that for all $n > N$, $|\V{Y_n} -  \E{(Z_n - aL_n)^2}| < \frac{a^2}{2S}$, so $\V{Y_n} -  \E{(Z_n - aL_n)^2} > -\frac{a^2}{2S}$. Then, by $\eqref{6}$, we get $\V{Y_n} > \frac{a^2}{2S}$ for all $n > N$.
\end{proof}

Finally, we choose $c$. Let
\begin{equation} c\ =\ \text{min}\left\{\frac{\V{K_{L+1}}}{L+1},\ \frac{\V{K_{L+2}}}{L+2},\ \dots,\ \frac{\V{K_{N}}}{N},\ \frac{a^2}{2SL}\right\},\end{equation}
Where $N$ is as determined in Lemma \ref{lem:varyn}. For all $n >  L$, $H_{n + 1} - H_n > 1$, so there are at least two integers in $[H_n, H_{n+1})$. Since the legal decomposition of $H_n$ has only one summand while that of $H_n + 1$ has two summands, $\V{K_n}$ is nonzero when $n >  L$. Hence, $c > 0$. In the next section we show $\V{K_n} \ge  cn$ for all $n >  L$.


\section{A lower bound for the Variance}

We prove Theorem \ref{thm:main} by strong induction. While the algebra is long, the main idea is easily stated: we condition based on how many summands are in the second to last block, which must be a type 2 block, and then use conditional probability arguments (inputting results for the mean and smaller cases) to compute the desired quantities.

\begin{proof}
The base cases $n = L+1, L+2, \dots, N$  are automatically true by the way we choose $c$. Hence, we only need to consider the cases when $n > N$. In the induction hypothesis, we assume $\V{K_r} \ge  cr$ for $L <  r < n$. In the inductive step, we prove $\V{K_n} \ge  cn$ where $n > N$.

	For $L <  r < n$, we have $\V{K_r} \ge  cr$ and $\E{K_r} = ar + b + f(r)$, hence
	\begin{equation}\label{7}
		\begin{split}
			\E{K_r^2} &\ =\ \V{K_r} + \left(\E{K_r}\right)^2\\
			&\ \ge\  cr + (ar + b + f(r))^2\\
			&\ = \  cr + a^2r^2 + b^2 + (f(r))^2 + 2arb + 2arf(r) + 2bf(r).
		\end{split}
	\end{equation}

	By $\eqref{2}$, we have
	\begin{equation*}\begin{split}
		\E{K_n^2} &\ = \  \sum\limits_{t=0}^{S-1}\PP{Z_n = t}\cdot\E{K_n^2|Z_n = t}\\
		&\ = \  \sum\limits_{t=0}^{S-1}\PP{Z_n = t} \cdot \left(\E{K_{n-\ell(t)}^2} + 2t[a(n-\ell(t)) + b + f(n-\ell(t))] + t^2\right).
	\end{split}\end{equation*}
	Note we only need to consider $n > N > 2L$, so $n > n-\ell(t) \ge  n - L >  L$ for all $0 \le t \le  S-1$. Hence, by $\eqref{7}$,
	\begin{equation*}\begin{split}
		\E{K_{n-\ell(t)}^2} &\ \ge\  c(n-\ell(t)) + a^2(n-\ell(t))^2 + b^2 + [f(n-\ell(t))]^2 + 2a(n-\ell(t))b\\
		&\ \quad\ + 2a(n-\ell(t))f(n-\ell(t)) + 2bf(n-\ell(t)).
	\end{split}\end{equation*}
	After we replace $\E{K_{n-\ell(t)}^2}$ in the conditional expectation $\E{K_n^2|Z_n = t}$ with this lower bound, any term either does not depend on $t$ or can be combined with other terms to form $(t+f(n-\ell(t))-a\ell(t))$. The final equation will then have two parts, one of which does not depend on $t$, while the other can be written in the form of $Z_n + f(n - L_n) - aL_n$, which is exactly $Y_n$. We find
	\begin{equation*}\begin{split}
		\E{K_n^2} &\ \ge \ \sum\limits_{t=0}^{S-1}\PP{Z_n = t} \cdot \Bigg[\left(c(n-\ell(t)) + a^2(n-\ell(t))^2 + b^2 + [f(n-\ell(t))]^2 + 2a(n-\ell(t))b\right.\\
		&\ \quad\ \left. + 2a(n-\ell(t))f(n-\ell(t)) + 2bf(n-\ell(t))\right)+ 2t[a(n-\ell(t)) + b + f(n-\ell(t))] + t^2\Bigg]\\
		&\ = \  \sum\limits_{t=0}^{S-1}\PP{Z_n = t} \cdot \Bigg[\left(c(n-\ell(t)) + a^2(n-\ell(t))^2 + b^2 + [f(n-\ell(t))]^2 + 2a(n-\ell(t))b\right. \\
		&\ \quad\ \left. + 2a(n-\ell(t))f(n-\ell(t)) + 2bf(n-l(
t))\right)+ \left(2tan - 2ta\ell(t) + 2tb + 2tf(n-\ell(t))\right) + t^2\Bigg]\\
		&\ = \  (an+b)^2 + cn + \sum\limits_{t = 0}^{S-1}\PP{Z_n = t}\cdot \Bigg[-c\ell(t) - 2a^2n\ell(t) + a^2(\ell(t))^2 + [f(n-\ell(t))]^2 - 2a\ell(t)b\\
		&\ \ \ \ + \  2anf(n-\ell(t)) - 2a\ell(t)f(n-\ell(t)) + 2bf(n-\ell(t)) + 2tan - 2ta\ell(t) + 2tb + 2tf(n-\ell(t)) + t^2  \Bigg]\\
		&\ = \  (an+b)^2 + cn + \sum\limits_{t = 0}^{S-1}\PP{Z_n  = t}\cdot \Bigg[\left(a^2(\ell(t))^2 + [f(n-\ell(t))]^2 + t^2 - 2a\ell(t)f(n-\ell(t)) - 2ta\ell(t)\right. \\
		& \ \ \ + \  \left. 2tf(n-\ell(t)) \right) + 2an\left(t + f(n-\ell(t)) - a\ell(t)\right) + 2b\left(t+f(n-\ell(t))-a\ell(t)\right) - c\ell(t)\Bigg]\\
		&\ = \  (an+b)^2 + cn + \sum\limits_{t = 0}^{S-1}\PP{Z_n = t}\cdot\Bigg[\left(t+f(n-\ell(t))-a\ell(t)\right)^2 \\
		&\ \ \ \ + \  2(an+b)\left(t+f(n-\ell(t))-a\ell(t)\right) - c\ell(t)\Bigg]\\
		&\ = \  (an+b)^2 + cn + \sum\limits_{t = 0}^{S-1}\PP{Z_n = t}\cdot\left(t+f(n-\ell(t))-a\ell(t)\right)^2\\
		&\ \ \ \ + \  2(an+b)\sum\limits_{t=0}^{S-1}\PP{Z_n = t}\cdot\left(t+f(n-\ell(t)) -a\ell(t)\right) - c\sum\limits_{t=0}^{S-1}\PP{Z_n = t}\cdot \ell(t)\\
		&\ = \  (an+b)^2 + cn + \E{(Z_n+f(n-L_n) -aL_n)^2} + 2(an+b)f(n) - c\E{L_n},
	\end{split}\end{equation*}
	where the last equality comes from $\eqref{4}$.
	
	We already know $\left(\E{K_n}\right)^2 = (an+b+f(n))^2 = (an+b)^2 + 2(an+b)f(n) + (f(n))^2$, hence
	\begin{equation*}\begin{split}
		\V{K_n} - cn &\ =\ \E{K_n^2} - \left(\E{K_n}\right)^2 - cn\\
		&\ \ge\  \E{(Z_n+f(n-L_n) -aL_n)^2} - c\E{L_n}-(f(n))^2\\
		&\ = \  \E{Y_n^2} - c\E{L_n} - \left(\E{Y_n}\right)^2\\
		&\ = \  \V{Y_n} - c\E{L_n}\\
		&\ \ge \ \V{Y_n} - cL\\
		&\ \ge \ 0,
	\end{split}\end{equation*}
	where the last inequality comes from our definition of $c$ and $\eqref{9}$.
	
	Therefore, $\V{K_n} \ge  cn$ for all $n >  L$. In other words, if $\V{K_n} = Cn + d + o(1)$, then $C \ge  c > 0$.
\end{proof}


\ \\

\end{document}